\newcommand{\maybebf}{}
    \renewcommand{\sectionmark}[1]{\markboth{##1}{}}
\renewcommand{\sectionmark}[1]{%
  \markboth{%
    \ifnum\value{section}>0
      \maybebf{\thesection.}\space
    \fi
    #1%
  }{%
    \ifnum\value{subsection}=0
            \thesection. #1
        \fi%
  }%
}
\renewcommand{\subsectionmark}[1]{%
    \markright{%
        \ifnum\value{subsection}>0
            \thesubsection. #1
        \fi%
    }%
}
\definecolor{hlinkcol}{HTML}{A00000}
\definecolor{hcitecol}{HTML}{308030}
\theoremstyle{plain}
\newtheorem{thm}{Theorem}[section]
\newtheorem*{thm:main30}{Theorem \ref*{thm:main30}}
\newtheorem*{thm:embed}{Theorem \ref*{thm:embed20}}
\newtheorem{prop}[thm]{Proposition}
\newtheorem{lemma}[thm]{Lemma}
\newtheorem{cor}[thm]{Corollary}
\newtheorem*{thm*}{Theorem}
\newtheorem*{lemma*}{Lemma}
\newtheorem*{prop*}{Proposition}
\newtheorem*{conj*}{Conjecture}
\newtheorem*{cor*}{Corollary}
\theoremstyle{definition}
\newtheorem{defin}[thm]{Definition}
\newtheorem{question}[thm]{Question}
\newtheorem*{defin*}{Definition}
\theoremstyle{remark}
\newtheorem{remark}[thm]{Remark}
\newcommand{\id}{\mathrm{id}}
\newcommand{\ZZ}{\mathbb Z}
\newcommand{\QQ}{\mathbb Q}
\newcommand{\RR}{\mathbb R}
\newcommand{\HH}{\mathbb H}
\newcommand{\til}{\widetilde}
\tikzset{
dot/.style = {circle, fill, minimum size=#1,
              inner sep=0pt, outer sep=0pt},
dot/.default = 5pt
}
\DeclarePairedDelimiter\abs{\lvert}{\rvert}%
\DeclarePairedDelimiter\norm{\lVert}{\rVert}%
\let\oldabs\abs
\def\abs{\@ifstar{\oldabs}{\oldabs*}}
\let\oldnorm\norm
\def\norm{\@ifstar{\oldnorm}{\oldnorm*}}
\DeclareMathOperator{\image}{im}
\DeclareMathOperator{\Hom}{Hom}
\DeclareMathOperator{\diag}{diag}
\DeclareMathOperator{\vspan}{span}
\DeclareMathOperator{\Fac}{Fac}
\newcommand{\Sq}{\mathrm{Sq}}
\newcommand{\fbseries}{
\unskip\setBold\aftergroup\unsetBold\aftergroup\ignorespaces}
\newcommand{\setBoldness}[1]{\def\fake@bold{#1}}
\tikzset{
    rots/.style={anchor=south, rotate=90, inner sep=.5mm}
}
\newcommand{\spinc}{\ensuremath{\text{spin}^c}}
\newcommand{\pinc}{\ensuremath{\text{pin}^c}}
\newcommand\restr[2]{{%
  \left.\kern-\nulldelimiterspace %
  #1 %
  \vphantom{\big|} %
  \right|_{#2} %
  }}
\author{Jacopo G. Chen\thanks{Scuola Normale Superiore, Pisa, Italy. Email: \href{mailto:jacopo.chen@sns.it}{\texttt{jacopo.chen@sns.it}}}
}
\title{Closed hyperbolic manifolds without \spinc{} structures}
\begin{document}

\DTMsetdatestyle{mydateformat}
\date{}
\maketitle

\begingroup
\centering\small
\textbf{Abstract}\par\smallskip
\begin{minipage}{\dimexpr\paperwidth-9.5cm}
In all dimensions $n \ge 5$, we prove the existence of closed orientable hyperbolic manifolds that do not admit any \spinc{} structure, and in fact we show that there are infinitely many commensurability classes of such manifolds. These manifolds all have non-vanishing third Stiefel--Whitney class $w_3$ and are all arithmetic of simplest type. More generally, we show that for each $k \ge 1$ and $n \ge 4k+1$, there exist infinitely many commensurability classes of closed orientable hyperbolic $n$-manifolds $M$ with $w_{4k-1}(M) \ne 0$.
\end{minipage}
\par\endgroup
\section{Introduction}\label{sec:intro}
Let $M$ be a smooth $n$-manifold. Various properties of the tangent bundle $TM$, such as orientability and existence of spin and \spinc{} structures, 
%are related to the existence of lifts of the classifying map  $g\colon M\to B\mathrm{O}(n)$ is the classifying map of the tangent bundle $TM$, we can define various properties of $M$, such as orientability and existence of spin and \spinc{} structures, by the existence of lifts of $g$ to appropriate spaces, in these cases $B\mathrm{SO}(n)$, $B\mathrm{Spin}(n)$ and $B\mathrm{Spin}^c(n)$ respectively. We do not assume familiarity with these concepts: indeed, there is 
have a relatively simple characterization involving the Stiefel--Whitney classes $w_i(M) \coloneqq w_i(TM)$. Indeed, as we discuss in Section~\ref{sec:sw}, we have:
\begin{itemize}
    \item \makebox[2.7cm][l]{$M$ is orientable} $\iff$ $w_1(M) = 0$;
    \item \makebox[2.7cm][l]{$M$ is spin} $\iff$ $w_1(M) = w_2(M) = 0$;
    \item \makebox[2.7cm][l]{$M$ is \spinc{}} $\iff$ $w_1(M) = 0$ and $w_2(M)$ lifts to $H^2(M; \ZZ)$.
\end{itemize}
These can be taken as definitions; moreover, the lifting condition for $w_2(M)$ is equivalent to the vanishing of the \emph{third integral Stiefel--Whitney class} $W_3(M) \in H^3(M; \ZZ)$, which reduces modulo $2$ to $w_3(M)$ for orientable manifolds $M$.

A spin manifold is also orientable and \spinc{}. We also recall that, in dimension up to $3$, every closed orientable manifold is spin; moreover, by a theorem of Hirzebruch and Hopf~\cite{hirzebruch-hopf}, every closed orientable $4$-manifold is \spinc{} (see also% an unpublished paper%
%by Teichner and Vogt
~\cite{teichner-vogt} for non-closed $4$-manifolds).

Some recent research has been focused on finding orientable hyperbolic manifolds that do not admit spin or \spinc{} structures.
In~\cite{virtual-spin}, Long and Reid found cusped non-spin manifolds in all dimensions $\ge 5$, while Martelli, Riolo and Slavich~\cite{non-spin} found compact non-spin manifolds in all dimensions $\ge 4$.
Even more recently, Reid and Sell~\cite{non-spinc-cusped} showed that there are infinitely many commensurability classes of non-\spinc{} cusped manifolds in all dimensions $\ge 6$.
\begin{comment}
\begin{thm}\label{thm:main0}
    For every $n\ge 5$, there exists a closed orientable hyperbolic $n$-manifold $M^n$ that has no \spinc{} structure. 
\end{thm}
\end{comment}

The main result of this paper covers the compact non-\spinc{} case:
\begin{thm}\label{thm:main20}
    For every $n \ge 5$, there exist infinitely many commensurability classes of closed orientable hyperbolic $n$-manifolds that have no \spinc{} structure.
\end{thm}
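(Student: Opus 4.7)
On an orientable manifold one has $w_3 = \Sq^1 w_2$, so $w_3 \ne 0$ forces $W_3 = \beta(w_2) \ne 0$, which is the obstruction to \spinc. I would therefore concentrate on producing closed arithmetic hyperbolic $n$-manifolds of simplest type with $w_3(M) \ne 0$, and obtain the general $w_{4k-1}$ statement via the same template.

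For each $n \ge 5$ pick an anisotropic, admissible quadratic form $q$ of signature $(n,1)$ over a totally real number field $k$; then $O(q,\mathcal{O}_k)$ is a uniform arithmetic lattice of $O(n,1)$, and a torsion-free orientation-preserving congruence subgroup $\Gamma$ gives a closed orientable hyperbolic $M = \HH^n/\Gamma$. Varying the discriminant of $q$ (or the field $k$) produces infinitely many commensurability classes. The hyperboloid model yields the tangent-bundle identity $TM \oplus \varepsilon^1 \cong E_\rho$, where $E_\rho$ is the flat rank-$(n+1)$ real bundle associated to the tautological $\rho \colon \Gamma \hookrightarrow O(n,1)$; in particular $w_i(M) = w_i(E_\rho)$. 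Now choose an orthogonal splitting $q = q_1 \perp q_2$ with $q_2$ positive definite of rank $3$ (of rank $4k-1$ in the general statement). After a congruence refinement the stabilizer of this splitting in $\Gamma$ produces an embedded closed orientable totally geodesic codimension-$3$ submanifold $N \subset M$, and $E_\rho|_N$ splits $\Gamma_N$-equivariantly as $(TN \oplus \varepsilon^1) \oplus F$, with $F$ the flat rank-$3$ normal bundle coming from the $q_2$-factor. The Whitney formula then gives
\[
w_3(M)|_N \;=\; w_3(TN) + w_2(TN)\,w_1(F) + w_1(TN)\,w_2(F) + w_3(F),
\]
and the analogous formula in degree $4k-1$; the hypothesis $n \ge 4k + 1$ is exactly what guarantees $\dim N \ge 2$.

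The main obstacle is to arrange this restriction to be non-zero in $H^{\ast}(N;\FF_2)$ — and hence force $w_3(M) \ne 0$ — uniformly across the infinite family of commensurability classes produced above. The class $w_3(F)$ is determined by the flat classifying map $N \to B\Gamma_N \to BO(3)$, which upon mod-$2$ reduction $\bar\rho_2 \colon \Gamma_N \to O(q_2, \FF_2)$ factors through a finite orthogonal group whose cohomology is explicit. The plan is therefore to pick $q_2$ so that the image of $\bar\rho_2$ is large (e.g.\ contains a subgroup on which the universal $w_3$ is known to be non-zero), pass to a further cover on which $F$ becomes orientable and spin to collapse the Whitney cross terms $w_i(TN)\,w_j(F)$, and then verify non-vanishing of the surviving term by an explicit group-cohomology computation — or alternatively by pairing $w_3(M)|_N$ with a smaller totally geodesic sub-cycle inside $N$ on which the contribution of $TN$ is transparent. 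The essential difficulty is not to exhibit a single lucky $q$ but to make the non-vanishing robust across the whole infinite family, which I would attack by fixing a common form for $q_2$ and varying $q_1$ (so the mod-$2$ image of $\bar\rho_2$ is controlled uniformly), and then appealing to commensurability invariance of the Stiefel--Whitney obstruction after passing to finite covers.
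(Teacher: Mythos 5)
Your reduction to ``orientable with $w_3 \ne 0$'' matches the paper, but the core of your plan has a genuine gap: the non-vanishing you need is never established, and one of your own steps destroys it. Your class $F$ is a flat rank-$3$ bundle whose monodromy factors through the finite group $\mathrm{O}(q_2,\mathcal O_k)$, so $w_3(F)$ is pulled back from the $\FF_2$-cohomology of a finite group along $\pi_1(N)\to G$; knowing that the universal class is non-zero in $H^3(BG;\FF_2)$ says nothing about its pullback to the specific arithmetic manifold $N$, and after the congruence refinements and torsion-free passages you invoke, that pullback can (and typically will) die. Worse, your proposed simplification is self-defeating: if you pass to a cover on which $F$ is orientable and spin, then $w_3(F)=\Sq^1(w_2(F))=0$ as well, so the restriction formula collapses to $w_3(M)|_N=w_3(N)$, and for $n=5,6,7$ the submanifold $N$ is a closed orientable manifold of dimension $\le 4$, which always has $w_3=0$; the argument then proves nothing exactly in the lowest dimensions the theorem is about. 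Finally, the closing appeal to ``commensurability invariance of the Stiefel--Whitney obstruction after passing to finite covers'' is false: neither $w_3\ne 0$ nor the failure of a \spinc{} structure is a commensurability invariant, since these classes can vanish after pulling back to a finite cover (indeed such manifolds are often virtually spin).

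The paper avoids both problems by working inductively in codimension one rather than in one shot in codimension three. The key tool is an extension of the Kolpakov--Reid--Slavich embedding (Theorem~\ref{thm:embed20}): a $k$-good $M^n$ embeds geodesically in a $k$-good $M^{n+1}$ with $w_1(\nu_{M^{n+1}}(M^n))$ equal to \emph{any prescribed} class $c\in H^1(M^n;\ZZ_2)$, built by replacing $\det(P)$ with $c(P)$ in the block embedding of orthogonal groups. Poincar\'e duality in $M^n$ then lets one choose $c$ so that $w_n(M^n)+w_{n-1}(M^n)c\ne 0$, which by the Whitney sum formula forces $w_n(M^{n+1})\ne 0$; starting from a $k$-good circle this yields $w_3(M^4)\ne 0$, and from dimension $5$ on one takes $c=w_1$ to get orientable manifolds, using the Wu relation $w_1w_2=0$ in dimension $4$ (and orientability afterwards) to keep $w_3\ne 0$. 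Infinitely many commensurability classes come not from covers but from varying the totally real field $k$, which \emph{is} a commensurability invariant for arithmetic manifolds of simplest type. If you want to salvage your approach, you would have to prove non-vanishing of $w_3$ of a specific flat bundle with finite monodromy on an arithmetic manifold, which is essentially the hard content the paper's prescribed-normal-bundle trick is designed to bypass.
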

Since having no \spinc{} structure also implies having no spin structure, this provides an alternate proof to the main result of~\cite{non-spin} in dimension $> 4$.

In similar fashion to~\cite{non-spin}, the proof technique relies on codimension-$1$ geodesic embeddings of arithmetic manifolds. More specifically, using results of Kolpakov, Reid and Slavich~\cite{embedding}, we extend~\cite[Lemma~5.1]{non-spin} to the following theorem, which may be of independent interest:

\begin{thm}\label{thm:embed20}
    Let $M^n$ be a $k$-good hyperbolic $n$-manifold and let $c \in H^1(M^n; \ZZ_2)$. Then $M^n$
    geodesically embeds in a $k$-good hyperbolic $(n + 1)$-manifold $M^{n+1}$ such that \begin{equation}
        w_1(\nu_{M^{n+1}}(M^n)) = c.
    \end{equation}
    If $c = w_1(M^n)$, we can take $M^{n+1}$ to be orientable.
\end{thm}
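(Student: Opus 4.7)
The plan is to embed $\pi_1(M^n) = \Gamma$ into $\mathrm{Isom}(\HH^{n+1})$ by a twist determined by $c$, and then apply the Kolpakov--Reid--Slavich arithmetic cocompactification technique~\cite{embedding} underlying~\cite[Lemma~5.1]{non-spin}. Realize $M^n = \HH^n/\Gamma$ with $\HH^n \subset \HH^{n+1}$ a totally geodesic hyperplane compatible with the arithmetic structure of $\Gamma$. The stabilizer of $\HH^n$ in $\mathrm{Isom}(\HH^{n+1})$ splits as $\mathrm{Isom}(\HH^n) \times \{1, r\}$, where $r$ is the reflection across $\HH^n$. Identifying $c$ with a homomorphism $c : \Gamma \to \{1, r\}$ via $H^1(M^n; \ZZ_2) = \Hom(\Gamma, \ZZ_2)$, I would set
\[
\Gamma_c := \{(\gamma, c(\gamma)) : \gamma \in \Gamma\}.
\]
Since $\Gamma$ acts freely on $\HH^n$ and $r$ acts trivially there, $\Gamma_c$ acts freely on $\HH^{n+1}$; the infinite-volume quotient $\HH^{n+1}/\Gamma_c$ is a line bundle over $M^n$ that deformation-retracts onto a totally geodesic copy of $M^n$, and by construction it is the line bundle associated to the character $c$, so its $w_1$ equals $c$.

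To upgrade to a closed $M^{n+1}$, note that $r$ is arithmetic (reflection in a standard vector orthogonal to the quadratic form cutting out $\HH^n$), so $\Gamma_c$ sits inside an arithmetic lattice $\Lambda \subset \mathrm{Isom}(\HH^{n+1})$ obtained by adjoining $r$ to a natural $k$-good extension of $\Gamma$. The subgroup separability argument of~\cite{embedding} should then produce a finite-index $k$-good subgroup $\Gamma' \le \Lambda$ containing $\Gamma_c$ such that the natural map $M^n \to M^{n+1} := \HH^{n+1}/\Gamma'$ remains a geodesic embedding. Because $\Gamma_c \subset \Gamma'$ and the normal-bundle calculation is local near $M^n$, this gives $w_1(\nu_{M^{n+1}}(M^n)) = c$.

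For the special case $c = w_1(M^n)$, a direct check shows that each $(\gamma, c(\gamma)) \in \Gamma_c$ is orientation-preserving on $\HH^{n+1}$, because the reflection sign $c(\gamma)$ exactly cancels the orientation character of $\gamma$ on $\HH^n$. Replacing $\Gamma'$ by $\Gamma' \cap \mathrm{Isom}^+(\HH^{n+1})$ --- an index-$\le 2$ modification that preserves both the embedding of $M^n$ and $k$-goodness --- should yield an orientable $M^{n+1}$. The hard part will be the arithmetic bookkeeping in the middle step: verifying that the enlarged lattice $\Lambda$ obtained after the twist can be chosen $k$-good, and that $k$-goodness is inherited by the separable finite-index subgroup $\Gamma'$. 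This is the only place where the hypothesis that $M^n$ be $k$-good, rather than merely arithmetic, is genuinely used, and it is where the specific flexibility provided by~\cite{embedding} is essential.
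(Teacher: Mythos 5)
Your proposal is correct and takes essentially the same route as the paper: your twisted group $\Gamma_c = \{(\gamma, c(\gamma))\}$ is exactly the paper's embedding $\psi_c\colon P \mapsto \diag(c(P), P) \in \mathrm{O}^+(g,k)$ with $g = f + y^2$, followed by the same Kolpakov--Reid--Slavich separability step, with $w_1$ of the normal bundle read off as the monodromy $c$. The ``arithmetic bookkeeping'' you flag as the hard part is in fact immediate in this matrix formulation (the twisted image visibly lies in the $k$-points of the admissible form $g$, and the point to note is geometric finiteness of $\Gamma_c$, which makes the separability argument of that paper apply verbatim); the only cosmetic difference is that for $c = w_1(M^n)$ the paper observes $\psi_c$ coincides with the determinant-twist embedding into $\mathrm{SO}^+(g,k)$ and quotes the orientable version directly, rather than intersecting with $\operatorname{Isom}^+(\HH^{n+1})$ afterwards.
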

(The definition of \emph{$k$-good}, for $k$ a totally real number field, is given in Section~\ref{sec:main-thm}.)
We use this result to recursively construct an infinite sequence of manifolds
\begin{equation}
    (M^1, M^2, M^3, M^4, M^5, \dots),
\end{equation}
starting from a $k$-good circle $M^1$. By carefully choosing the class $c$ at each step and applying the Whitney sum formula, we can ensure the following chain of implications:
\begin{equation}
\begin{split}
    w_0(M^1) \ne 0
    &\implies w_1(M^2) \ne 0 \implies w_2(M^3) \ne 0 \\
    &\implies w_3(M^4) \ne 0 \implies w_3(M^i) \ne 0\ \text{for $i \ge 5$}.
\end{split}
\end{equation}
Moreover, we also have that $M^i$ is orientable if $i \ge 5$. We then use the fact that different fields $k$ yield non-commensurable manifolds to obtain infinitely many commensurability classes of non-\spinc{} manifolds in each dimension, proving Theorem~\ref{thm:main20}.

In fact, the same argument can be used to prove a generalization of Theorem~\ref{thm:main20}:
\begin{thm}\label{thm:main30}
    For every $m \ge 1$ and $n \ge 4m+1$, there exist infinitely many commensurability classes of closed orientable hyperbolic $n$-manifolds $M$ such that $w_{4m-1}(M) \ne 0$.
\end{thm}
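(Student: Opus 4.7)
My plan is to run the same recursive construction used to prove Theorem~\ref{thm:main20}, but extended so that the Stiefel--Whitney index being pushed up reaches $4m-1$. For each totally real number field $k$, I would start from a $k$-good circle $M^1$ and inductively produce $k$-good hyperbolic manifolds $M^2, M^3, \ldots$ by iterating Theorem~\ref{thm:embed20} with prescribed classes $c_i \in H^1(M^i; \ZZ_2)$. The Whitney sum formula applied to $TM^{i+1}|_{M^i} = TM^i \oplus \nu$ gives the recursion
\begin{equation*}
    w_j(M^{i+1})|_{M^i} = w_j(M^i) + w_{j-1}(M^i) \cdot c_i,
\end{equation*}
which is the main tool for propagating non-zero classes.

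In the first phase, for $i = 1, \ldots, 4m-1$, I choose $c_i$ to realize the chain
\begin{equation*}
    w_0(M^1) \ne 0 \implies w_1(M^2) \ne 0 \implies \cdots \implies w_{4m-1}(M^{4m}) \ne 0.
\end{equation*}
Inductively, if $w_{i-1}(M^i) \ne 0$, then mod-$2$ Poincar\'e duality on the closed connected $i$-manifold $M^i$ (which does not require orientability) provides a $y \in H^1(M^i; \ZZ_2)$ with $w_{i-1}(M^i) \cdot y \ne 0 \in H^i(M^i; \ZZ_2) \cong \ZZ_2$. The two choices $c_i \in \{0, y\}$ then give distinct values for $w_i(M^{i+1})|_{M^i}$, and one of them is non-zero, forcing $w_i(M^{i+1}) \ne 0$.

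The delicate phase is the orientation step at $i = 4m$: setting $c_{4m} = w_1(M^{4m})$ lets Theorem~\ref{thm:embed20} take $M^{4m+1}$ orientable, and
\begin{equation*}
    w_{4m-1}(M^{4m+1})|_{M^{4m}} = w_{4m-1}(M^{4m}) + w_1(M^{4m}) \cdot w_{4m-2}(M^{4m}),
\end{equation*}
so the crux is the identity $w_1 \cdot w_{4m-2} = 0$ in every closed smooth $4m$-manifold (generalizing $w_1 w_2 = 0$ in dimension $4$ used for $m=1$). This is the main obstacle, and I would establish it by comparing two expressions for $\Sq^1(w_{4m-2})$. On the one hand, the Wu formula yields $\Sq^1(w_{4m-2}) = w_1 \cdot w_{4m-2} + (4m-3)\, w_{4m-1} = w_1 \cdot w_{4m-2} + w_{4m-1}$ in characteristic $2$. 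On the other hand, in dimension $4m$ only two terms survive in $w_{4m-2} = \sum_i \Sq^i(v_{4m-2-i})$, giving $w_{4m-2} = \Sq^{2m-2}(v_{2m}) + v_{2m-1}^2$; applying $\Sq^1$, the squared term is killed, while the Adem relation $\Sq^1 \Sq^{2m-2} = \Sq^{2m-1}$ together with $w_{4m-1} = \Sq^{2m-1}(v_{2m})$ give $\Sq^1(w_{4m-2}) = w_{4m-1}$. Equating the two expressions yields $w_1 \cdot w_{4m-2} = 0$, whence $w_{4m-1}(M^{4m+1})|_{M^{4m}} = w_{4m-1}(M^{4m}) \ne 0$.

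Finally, for $i \ge 4m+1$, inductively $M^i$ is orientable, so $c_i := 0 = w_1(M^i)$ preserves orientability while keeping $w_{4m-1}(M^{i+1})|_{M^i} = w_{4m-1}(M^i) \ne 0$. Infinitely many commensurability classes in each dimension $n \ge 4m+1$ come from varying $k$, exactly as in Theorem~\ref{thm:main20}.
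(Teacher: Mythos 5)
Your proposal is correct and follows essentially the same route as the paper: a recursive application of Theorem~\ref{thm:embed20} starting from a $k$-good circle, Poincar\'e duality to choose the classes $c_i$ up to dimension $4m$, the identity $w_1 w_{4m-2}=0$ on closed $4m$-manifolds at the orientation step, and varying the field $k$ for infinitely many commensurability classes. Your derivation of $w_1 w_{4m-2}=0$ (reducing $w_{4m-2}$ and $w_{4m-1}$ to the surviving Wu-class terms and applying $\Sq^1$) is a streamlined but equivalent version of the paper's Proposition~\ref{prop:wu}, resting on the same ingredients ($w=\Sq(v)$, the relation $\Sq^1\Sq^{2i}=\Sq^{2i+1}$, and vanishing of Wu classes above the middle dimension).
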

A direct consequence is that there exist closed orientable hyperbolic manifolds with nonzero Stiefel--Whitney classes in arbitrarily high degree.

Since the proof of Theorem~\ref{thm:main20} is non-constructive, in Section~\ref{sec:explicit} we also give a semi-explicit procedure to obtain a compact $5$-manifold without \spinc{} structures, based on two right-angled hyperbolic polytopes (the dodecahedron and the $120$-cell) and on a Coxeter $5$-polytope discovered by Bugaenko~\cite{bugaenko}.

Some natural questions arise. For instance, regarding the existence of more non-\spinc{} manifolds, we could try to escape the constraints given by our arithmetic tools:
\begin{question}
    Do there exist closed, orientable, non-arithmetic hyperbolic manifolds without \spinc{} structures?
\end{question}
\begin{question}
    Are there infinitely many commensurability classes of such manifolds?
\end{question}
A variation of our method based on the results of~\cite{embedding-na} might give an affirmative answer to both questions.

Furthermore, as the vanishing of $W_3$ does not \emph{a priori} imply that of $w_3$, we could ask:
\begin{question}
    Do there exist closed orientable hyperbolic manifolds without \spinc{} structures and with $w_3 = 0$?
\end{question}
Using characteristic classes, one could also investigate {higher spin structures} beyond spin and \spinc{}: for instance, the primary obstruction to a $\text{spin}^h$ structure is the fifth integral Stiefel--Whitney class $W_5$~\cite{spinh}. While this class is outside the scope of Theorem~\ref{thm:main30}, it is still natural to ask:
\begin{question}
    What can be said about the existence of orientable hyperbolic manifolds without higher spin structures?
\end{question}
Finally, we describe a potential alternate method for the proof of Theorems~\ref{thm:main20} and~\ref{thm:main30}.
In another paper~\cite{noncobordant}, the author shows the existence of hyperbolic $n$-manifolds that do not bound any $(n+1)$-manifold for infinitely many $n$, including $n=5$; the latter case corresponds to the non-vanishing of the Stiefel--Whitney number $w_2 w_3$, which implies $w_3 \ne 0$. In general, the technique also relies on the Kolpakov--Reid--Slavich embedding, to construct $n$-manifolds for which certain sums of Stiefel--Whitney numbers do not vanish, and may be adapted to obtain the non-vanishing of specific Stiefel--Whitney classes along with orientability; compare~\cite[Question~1.5]{noncobordant} and the preceding discussion.
\begin{comment}
Some natural questions arise. For instance, regarding the existence of more non-\spinc{} manifolds, we could try to escape the constraints given by our arithmetic tools:
\begin{question}
    Are there closed, orientable, hyperbolic manifolds without \spinc{} structures which are:
    \begin{enumerate}[label=(\arabic*)]
        \item\label{q1:field} defined over fields different from $\QQ(\sqrt{5})$?
        \item non-arithmetic?
    \end{enumerate}
\end{question}
Our proof technique could give a positive answer to \ref{q1:field} by starting with a different, non-$\text{pin}^c$ $4$-manifold $N$. An even harder problem could be:
\begin{question}
    Are there infinitely many commensurability classes of closed, orientable hyperbolic manifolds without \spinc{} structures?
\end{question}
Lastly, as the properties of being spin and \spinc{} are related to the classes $w_2$ and $W_3$, it is natural to ask:
\begin{question}
    What can be said about the existence of hyperbolic manifolds with non-vanishing higher characteristic classes?
\end{question}
Such classes arise as obstructions to higher spin structures on manifolds: for instance, the primary obstruction to a $\text{spin}^h$ structure is the fifth integral Stiefel--Whitney class $W_5$.
\end{comment}
\subsection*{Structure of the paper}
In Section~\ref{sec:sw}, we recall some properties of the Stiefel--Whitney classes, and how they relate to the existence of \spinc{} structures. Then, in Section~\ref{sec:main-thm}, we prove Theorems~\ref{thm:main20} and~\ref{thm:main30} using arithmetic methods. Finally, we give an alternate construction of a non-\spinc{} hyperbolic $5$-manifold in Section~\ref{sec:explicit}.
\subsection*{Acknowledgments}
I am grateful to my advisor Bruno Martelli for bringing my attention to this problem and for his guidance during the writing of this paper. I would also like to thank Alexander Kolpakov, Alan W.\ Reid and Leone Slavich for a discussion on Theorem~\ref{thm:embed20}.
%\clearpage
\begin{comment}
\renewcommand{\baselinestretch}{1.0}\normalsize
\tableofcontents
\renewcommand{\baselinestretch}{1.0}\normalsize
\clearpage
\end{comment}
\section{Stiefel\texorpdfstring{--}{-}Whitney classes}\label{sec:sw}
In this section, we reframe the problem of finding non-\spinc{} manifolds in terms of characteristic classes.

Let $E \twoheadrightarrow M$ be a real vector bundle over a manifold $M$. We denote the \emph{Stiefel--Whitney classes} of $E$ by $w_i(E) \in H^i(M; \ZZ_2)$ for $i\ge 0$, where the class $w_0(E)$ is defined to be $1\in H^0(M; \ZZ_2)$. These classes can be collected into an element of the cohomology ring
\begin{equation}
    w(E) \coloneqq 1 + w_1(E) + w_2(E) + \dots \in H^*(M; \ZZ_2),
\end{equation}
called the \emph{total Stiefel--Whitney class} of $E$. We also have the \emph{Whitney sum formula}
\begin{equation}
    w(E \oplus E') = w(E) w(E')
\end{equation}
for two bundles $E, E'$ over $M$.
When $E$ is the tangent bundle $TM$, we speak of the Stiefel--Whitney classes of $M$ and write $w_i(M), w(M)$.

The following result will be very useful in the proof of the main theorem.

\begin{lemma}\label{lemma:whitney-codim1}
    Let $j\colon N\hookrightarrow M$ be a codimension-$1$ embedding of manifolds. Then, for all $i \ge 1$, we have \begin{equation}
        j^*(w_i(M)) = w_i(N) + w_{i-1}(N)w_1(\nu_M(N)).
    \end{equation}
\end{lemma}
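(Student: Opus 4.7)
The plan is to use the standard tubular-neighborhood decomposition of the restricted tangent bundle together with the Whitney sum formula, exploiting that the normal bundle is a line bundle.

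First, I would recall that for any codimension-$1$ embedding $j\colon N \hookrightarrow M$ the pullback of the tangent bundle splits as $j^*(TM) \cong TN \oplus \nu_M(N)$, where $\nu_M(N)$ is the normal line bundle. By naturality of Stiefel--Whitney classes, $j^*(w(M)) = w(j^*(TM))$, and the Whitney sum formula then gives
\begin{equation}
    j^*(w(M)) = w(N)\cdot w(\nu_M(N)).
\end{equation}

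Since $\nu_M(N)$ has rank $1$, its total Stiefel--Whitney class is simply $w(\nu_M(N)) = 1 + w_1(\nu_M(N))$; all higher Stiefel--Whitney classes of a line bundle vanish. Substituting in and distributing,
\begin{equation}
    j^*(w(M)) = w(N) + w(N)\,w_1(\nu_M(N)).
\end{equation}
Extracting the degree-$i$ component of both sides and using $w(N) = 1 + w_1(N) + w_2(N) + \cdots$, the right-hand side contributes $w_i(N)$ from the first summand and $w_{i-1}(N)\,w_1(\nu_M(N))$ from the second, yielding the claimed formula.

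No step looks like a real obstacle: the only thing worth being careful about is the splitting $j^*(TM) \cong TN \oplus \nu_M(N)$, which is the standard identification arising from a choice of tubular neighborhood (and is independent of this choice up to isomorphism), and the observation that a rank-$1$ real bundle has $w_k = 0$ for $k \ge 2$. Everything else is a bookkeeping expansion of the Whitney sum formula in degree $i$.
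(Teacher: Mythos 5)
Your proof is correct and follows exactly the paper's argument: the splitting $j^*(TM) \simeq TN \oplus \nu_M(N)$, naturality of Stiefel--Whitney classes, and the Whitney sum formula, with the line-bundle observation $w(\nu_M(N)) = 1 + w_1(\nu_M(N))$ just making explicit the degree-$i$ bookkeeping the paper leaves to the reader.
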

\begin{proof}
    We have $j^*(TM) = \restr{TM}{N} \simeq TN\oplus \nu_M(N)$. The statement follows from naturality of $w_i$ and the Whitney sum formula.
\end{proof}

We also note the existence of some algebraic relations, arising from the \emph{Wu formulas}, that hold for all closed manifolds of a given dimension; an example in dimension $4$, that we will use in the following, is $w_1w_2 = 0$. More generally, we have:

\begin{prop}\label{prop:wu}
    Let $M$ be a closed $4k$-manifold. Then we have $w_1(M)w_{4k-2}(M) = 0$.
\end{prop}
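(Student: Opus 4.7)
The plan is to reduce the statement to a top-dimensional identity via Poincaré duality, and then derive it by two applications of Wu's formula, one to $\Sq^1$ and one to $\Sq^2$.

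By Poincaré duality with $\ZZ_2$ coefficients on the closed manifold $M$, showing that $w_1 w_{4k-2} = 0$ in $H^{4k-1}(M; \ZZ_2)$ is equivalent to showing that $y\, w_1\, w_{4k-2} = 0 \in H^{4k}(M; \ZZ_2)$ for every $y \in H^1(M; \ZZ_2)$. I plan to establish this using the Wu identity $\Sq^i(x) = v_i\, x$ in the top cohomology for $x \in H^{4k-i}$, with $v_1 = w_1$ and $v_2 = w_1^2 + w_2$, together with the universal formulas $\Sq^1(w_j) = w_1 w_j + (j-1) w_{j+1}$ and $\Sq^2(w_j) = w_2 w_j + (j-2) w_1 w_{j+1} + \binom{j-1}{2} w_{j+2}$, and the Cartan formula. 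Note that for $j = 4k-3$ both $4k-4$ and $\binom{4k-4}{2}$ are even, so these simplify to $\Sq^1(w_{4k-3}) = w_1 w_{4k-3}$ and $\Sq^2(w_{4k-3}) = w_2 w_{4k-3} + w_1 w_{4k-2}$.

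In the first step, I apply $\Sq^2$ to the class $y w_{4k-3} \in H^{4k-2}$. The Wu identity gives $\Sq^2(y w_{4k-3}) = (w_2 + w_1^2)\, y w_{4k-3}$, while Cartan combined with the simplified Wu formulas above gives $\Sq^2(y w_{4k-3}) = y w_2 w_{4k-3} + y w_1 w_{4k-2} + y^2 w_1 w_{4k-3}$. Equating the two expressions yields
\[
y w_1 w_{4k-2} = y w_1^2 w_{4k-3} + y^2 w_1 w_{4k-3} \qquad \text{in } H^{4k}.
\]

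In the second step, I apply $\Sq^1$ to the class $w_1 y w_{4k-3} \in H^{4k-1}$. The Wu identity gives $\Sq^1(w_1 y w_{4k-3}) = w_1^2 y w_{4k-3}$. Expanding by Cartan produces $w_1^2 y w_{4k-3} + w_1 y^2 w_{4k-3} + w_1^2 y w_{4k-3}$, where the two copies of $w_1^2 y w_{4k-3}$ cancel mod $2$; this gives $y w_1^2 w_{4k-3} = y^2 w_1 w_{4k-3}$ in $H^{4k}$. Substituting this into the identity from the first step yields $y w_1 w_{4k-2} = 2 y^2 w_1 w_{4k-3} = 0$ in $H^{4k}$ for every $y \in H^1$, which by Poincaré duality completes the proof. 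The main subtlety is the parity bookkeeping in the Wu binomial coefficients, which falls out favourably precisely because the hypothesis is $n = 4k$.
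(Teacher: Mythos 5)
Your argument is correct, but it follows a genuinely different route from the paper's. You dualize: by the nondegeneracy of the $\ZZ_2$ cup-product pairing on a closed manifold, it suffices to kill $y\,w_1 w_{4k-2}$ in top degree for every $y \in H^1(M;\ZZ_2)$, and you do this with two applications of Wu's theorem in the form $\Sq^i(x) = v_i\, x$ on $H^{4k-i}$ (using only $v_1 = w_1$ and $v_2 = w_1^2 + w_2$), combined with the Cartan formula and the universal Wu formulas for $\Sq^1(w_{4k-3})$ and $\Sq^2(w_{4k-3})$; the parity checks ($4k-4$ and $\binom{4k-4}{2}$ even, $4k-5$ odd so that the $w_1 w_{4k-2}$ term survives in $\Sq^2(w_{4k-3})$) all hold, and the two identities you derive do cancel to give $y\,w_1 w_{4k-2} = 0$. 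The paper instead works directly in $H^{4k-1}(M;\ZZ_2)$: expanding $w = \Sq(v)$, it rewrites $w_1 w_{4k-2}$ as $\sum_{i=0}^{2k-1} \Sq^{2i}(v_{4k-1-2i})$ via the relations $\Sq^{2i+1} = \Sq^1\Sq^{2i}$ and $\Sq^1\Sq^{2i+1} = 0$, and each summand vanishes because Wu classes of a closed $4k$-manifold die above degree $2k$ and $\Sq^i(v_j) = 0$ for $i > j$. Your version is more hands-on and needs only the two lowest Wu classes plus standard $\Sq^1,\Sq^2$ formulas, at the cost of an auxiliary class $y$ and a duality reduction; the paper's version is an intrinsic identity of classes whose bookkeeping (the mod-$4$ dimension hypothesis entering through the even/odd splitting of the Steenrod squares) is more systematic and closer in spirit to the general theory of Wu-type relations. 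One stylistic caveat: your parenthetical justification cites the evenness of $4k-4$ and $\binom{4k-4}{2}$, but the reader must also note that the coefficient $4k-5$ of $w_1 w_{4k-2}$ in $\Sq^2(w_{4k-3})$ is odd --- this is exactly what makes the target term appear, and is where the hypothesis $n = 4k$ (rather than, say, $n \equiv 2 \bmod 4$) is used; it would be worth saying explicitly.
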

\begin{proof}
    Following~\cite{milnor-stasheff}, we can express the total Stiefel--Whitney class as the total Steenrod square of the total Wu class: $w(M) = \Sq(v(M))$. In particular, writing simply $w_i, v_i$ for $w_i(M), v_i(M)$, we have:
    \begin{align}
        w_1 w_{4k-2} &= \Sq^1(w_{4k-2}) + w_{4k-1} \\
        &= \Sq^1(w_{4k-2}) + \sum_{i = 0}^{4k-1} \Sq^i(v_{4k-1-i}) \\
        &= \Sq^1(w_{4k-2}) + \sum_{i = 0}^{2k-1} \Sq^{2i+1}(v_{4k-2-2i})
        + \sum_{i = 0}^{2k-1} \Sq^{2i}(v_{4k-1-2i}) \\
        &= \Sq^1\bigg(w_{4k-2} + \sum_{i = 0}^{2k-1} \Sq^{2i}(v_{4k-2-2i})\bigg)
        + \sum_{i = 0}^{2k-1} \Sq^{2i}(v_{4k-1-2i}) \\
        &= \Sq^1\bigg(\sum_{i = 0}^{2k-2} \Sq^{2i+1}(v_{4k-3-2i})\bigg)
        + \sum_{i = 0}^{2k-1} \Sq^{2i}(v_{4k-1-2i}) \\
        &= \sum_{i = 0}^{2k-1} \Sq^{2i}(v_{4k-1-2i}), \label{eq:sum-wu}
    \end{align}
    using the relations $\Sq^{2i+1} = \Sq^1\Sq^{2i}$ and $\Sq^1\Sq^{2i+1} = 0$. Finally, the sum~(\ref{eq:sum-wu}) is zero since the Wu classes of a closed manifold vanish above the middle dimension $2k$, and $\Sq^i(v_j) = 0$ for $i > j$.
\end{proof}

Stiefel--Whitney classes provide obstructions to various structures on a manifold $M$. In particular:
\pagebreak[2]
\begin{itemize}
    \item \makebox[2.7cm][l]{$M$ is orientable} $\iff$ $w_1(M) = 0$;
    \item \makebox[2.7cm][l]{$M$ is spin} $\iff$ $w_1(M) = w_2(M) = 0$;
    \item \makebox[2.7cm][l]{$M$ is \pinc{}} $\iff$ $w_2(M)$ lifts to the integral cohomology group $H^2(M; \ZZ)$;
    \item \makebox[2.7cm][l]{$M$ is \spinc{}} $\iff$ $w_1(M) = 0$ and $M$ is \pinc{}.
\end{itemize}
The conditions for having \pinc{} or \spinc{} structures are arguably in a less usable form than the first two, especially for finding counterexamples; in order to resolve this issue, we introduce the \emph{integral Stiefel--Whitney classes}.

The short exact sequence of groups
\begin{equation}
    0 \longrightarrow \ZZ \xrightarrow{\ {\cdot 2}\ } \ZZ \longrightarrow \ZZ_2 \longrightarrow 0
\end{equation}
induces a long exact sequence on cohomology
\begin{equation}
    \dots \longrightarrow H^i(M; \ZZ) \longrightarrow H^i(M; \ZZ)\xrightarrow{\ \rho\ } H^i(M; \ZZ_2)\xrightarrow{\ \beta\ } H^{i+1}(M; \ZZ)\longrightarrow \dots,
\end{equation}
where $\rho$ is the reduction modulo $2$ and $\beta$ is the \emph{Bockstein homomorphism}. We can now define the $(i+1)$-th \emph{integral Stiefel--Whitney class}:
\begin{equation}
    W_{i+1}(M) \coloneqq \beta(w_i(M)) \in H^{i+1}(M; \ZZ).
\end{equation}
By exactness, we immediately see that:
\begin{itemize}
    \item \makebox[2cm][l]{$M$ is \pinc{}} $\iff$ $W_3(M) = 0$;
    \item \makebox[2cm][l]{$M$ is \spinc{}} $\iff$ $W_3(M) = 0$ and $w_1(M) = 0$.
\end{itemize}
\begin{remark}\label{rmk:w3}
    It is well known that the reduction of $W_3$ modulo $2$ is the {first Steenrod square} of the second Stiefel--Whitney class $\Sq^1(w_2) = w_2 w_1 + w_3$, which is much easier to compute than its integral counterpart. Indeed, we have $\Sq^1(w_2) = w_3$ for closed orientable manifolds (or closed $4$-manifolds, by the Wu formulas).
\end{remark}

As such, our general strategy will be to construct orientable manifolds with non-vanishing third Stiefel--Whitney class.

\section{The main theorem}\label{sec:main-thm}
The aim of this section is to prove Theorem~\ref{thm:main20}. The proof relies on a technique similar to that of \cite[Section~5]{non-spin}: we recursively construct a sequence of manifolds $(M^i)_{i\ge 1}$ by embedding each one as a totally geodesic submanifold of the next. We start by extending \cite[Lemma~5.1]{non-spin} to possibly non-orientable manifolds.
    
\begin{defin}
    Let $k$ be a totally real number field, that is, a finite extension of $\QQ$ whose complex embeddings have image in $\RR$. Assume $k \ne \QQ$. We say that a hyperbolic manifold (or orbifold) is \emph{$k$-good} if it is closed and arithmetic of simplest type, with admissible quadratic form $f$ defined over $k$ and with fundamental group contained in the $k$-points $\mathrm{O}(f, k)$. (Note that the condition $k \ne \QQ$ implies that the manifold is compact.)
\end{defin}

\begin{lemma}[compare {\cite{embedding}}, {\cite[Lemma~5.1]{non-spin}}]\label{lemma:embed}
    Let $M^n$ be a $k$-good hyperbolic $n$-manifold. Then $M^n$
    geodesically embeds in an orientable $k$-good hyperbolic $(n + 1)$-manifold.
\end{lemma}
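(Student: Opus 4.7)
The plan is to adapt the proof of \cite[Lemma~5.1]{non-spin}, which handles orientable $M^n$, by introducing an algebraic twist that forces the ambient $(n+1)$-manifold to be orientable even when $M^n$ is not. Write $M^n = \HH^n/\Gamma$ with $\Gamma < \mathrm{O}(f, k)$, for $f$ an admissible quadratic form of signature $(n, 1)$ over $k$, and extend it to $f' = f \oplus \langle 1 \rangle$, of signature $(n+1, 1)$. This realises $\HH^n$ as a totally geodesic hyperplane in $\HH^{n+1}$, whose stabiliser in $\mathrm{O}(f', k)$ is $\mathrm{O}(f, k) \times \{\pm 1\}$.

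The key idea is to use the twisted embedding
\begin{equation}
    \varphi\colon \mathrm{O}(f, k) \hookrightarrow \mathrm{SO}(f', k), \qquad \varphi(A) = \diag(A, \det A),
\end{equation}
which lands inside the orientation-preserving subgroup while still stabilising the hyperplane. Geometrically, an orientation-reversing $A \in \Gamma$ now acts on $\HH^{n+1}$ by simultaneously reversing orientation on $\HH^n$ and swapping the two sides of the hyperplane, so that the combined action is orientation-preserving. A quick computation shows that the stabiliser of $\HH^n$ inside $\mathrm{SO}(f', k)$ is exactly $\varphi(\mathrm{O}(f, k))$, with no extraneous elements.

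With this setup, I would apply the Kolpakov--Reid--Slavich embedding theorem \cite{embedding}, in the form used in \cite[Lemma~5.1]{non-spin}, starting from the subgroup $\varphi(\Gamma) \subset \mathrm{SO}(f', k)$. This should produce a torsion-free arithmetic $\Lambda < \mathrm{SO}(f', k)$, commensurable with $\mathrm{SO}(f', \mathcal O_k)$, such that $\Lambda \cap \varphi(\mathrm{O}(f, k)) = \varphi(\Gamma)$. Then $M^{n+1} = \HH^{n+1}/\Lambda$ is closed, orientable and $k$-good by construction, and it contains $M^n$ as a totally geodesic embedded submanifold.

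The main obstacle will be confirming that the separability machinery underlying the Kolpakov--Reid--Slavich theorem applies to the twisted image $\varphi(\Gamma)$, and produces a $\Lambda$ already contained in $\mathrm{SO}(f', k)$ rather than only in $\mathrm{O}(f', k)$. Since $\varphi(\Gamma)$ remains a geometrically finite, arithmetic subgroup of $\mathrm{SO}(f', k)$, and the Bergeron--Haglund--Wise separability results powering \cite{embedding} do not require any orientability hypothesis on $\Gamma$, this step should reduce cleanly to the orientable case already treated in \cite[Lemma~5.1]{non-spin}.
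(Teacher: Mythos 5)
Your proposal is correct and follows essentially the same route as the paper: the determinant-twisted embedding $A \mapsto \diag(A,\det A)$ into $\mathrm{SO}$ of $g = f + y^2$ is exactly the content of~\cite[Corollary~5.2]{embedding} that the paper invokes, followed by the same Kolpakov--Reid--Slavich separability argument (which, as you note, needs no orientability hypothesis, only geometrical finiteness of the image). The only cosmetic difference is placing $\det A$ in the last coordinate rather than the first.
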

\begin{proof}[Sketch of proof]
    Let $f$ be the admissible quadratic form associated to $M^n$. Define a new form $g = f + y^2$ with an additional variable $y$; this form is also admissible. As outlined in \cite{embedding}, we can embed the group $\mathrm{O}(f, k)$ into $\mathrm{O}(g, k)$. By~\cite[Corollary~5.2]{embedding}, this can be improved to an embedding $\mathrm{O}^+(f, k) < \mathrm{SO}^+(g, k)$. A separability argument gives a torsion-free, cocompact subgroup $\Lambda < \mathrm{SO}^+(g, k) < \operatorname{Isom}^+(\HH^{n+1})$ containing $\pi_1(M^n)$, and such that $M^n$ embeds geodesically in the orientable manifold $M^{n+1}\coloneqq \HH^{n+1} / \Lambda$.
\end{proof}
Note that, by orientability of $M^{n+1}$, we have $w_1(\nu_{M^{n+1}}(M^n)) = w_1(M^n)$. 
%In fact, as the following generalization shows, we can realize any choice of normal bundle for $M^n$.
In fact, Theorem~\ref{thm:embed20} (restated below) generalizes this by realizing any choice of a normal bundle for $M^n$:
\begin{thm:embed}\label{thm:embed2}
    Let $M^n$ be a $k$-good hyperbolic $n$-manifold and let $c \in H^1(M; \ZZ_2)$. Then $M^n$
    geodesically embeds in a $k$-good hyperbolic $(n + 1)$-manifold $M^{n+1}$ such that \begin{equation}
        w_1(\nu_{M^{n+1}}(M^n)) = c.
    \end{equation}
    If $c = w_1(M^n)$, we can take $M^{n+1}$ to be orientable.
\end{thm:embed}
\begin{proof}%[Sketch of proof]
    The key part in the proof of Lemma~\ref{lemma:embed} is the embedding $\psi\colon \mathrm{O}^+(f,k) \hookrightarrow \mathrm{SO}^+(g,k)$. This is accomplished in~\cite[Corollary~5.2]{embedding}, on the level of matrices, by sending
    \begin{equation}
        \renewcommand{\arraystretch}{1.5}
        P \mapsto \left[
        \begin{array}{c|c}
            \det(P) & 0 \\ \hline
            0 & P
        \end{array}\right].
    \end{equation}
    Let us instead define an embedding $\psi_c\colon \pi_1(M) \hookrightarrow \mathrm{O}^+(g,k)$ by
    \begin{equation}
        \renewcommand{\arraystretch}{1.5}
        P \mapsto \left[
        \begin{array}{c|c}
            c(P) & 0 \\ \hline
            0 & P
        \end{array}\right],
    \end{equation}
    where $c$ is seen as a map $\pi_1(M) \to \ZZ_2 \simeq \{\pm 1\}$. After carrying out the rest of the construction in the same way\footnote{Here an essential fact is that $\image(\psi_c) < \operatorname{Isom}(\HH^{n+1})$ is geometrically finite: see the proof of Theorem~1.1 in~\cite[Section 7]{embedding}.}, we obtain a manifold $M^{n+1}$ in which $M^n$ embeds geodesically. Moreover, the normal bundle $E \coloneqq \nu_{M^{n+1}}(M^n)$ is the quotient of $\nu_{\HH^{n+1}}(\HH^n)$ by the action of $\image(\psi_c)$ on its base space. It is now clear that $w_1(E)$, that is, the monodromy of the associated double cover, is simply $c$.

    If $c = w_1(M^n)$, then $\psi_c = \psi$ on $\pi_1(M)$, and the original proof of Lemma~\ref{lemma:embed} ensures global orientability.
\end{proof}
\begin{cor}\label{cor:nonzero-whitney}
    Let $M^n$ be a $k$-good hyperbolic $n$-manifold such that $w_{n-1}(M^n) \ne 0$. Then $M^n$ geodesically embeds in a $k$-good hyperbolic $(n+1)$-manifold $M^{n+1}$ with $w_n(M^{n+1}) \ne 0$.
\end{cor}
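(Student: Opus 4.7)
The plan is to apply Theorem~\ref{thm:embed20} with a carefully chosen cohomology class $c \in H^1(M^n; \ZZ_2)$, and then read off $w_n(M^{n+1})$ using Lemma~\ref{lemma:whitney-codim1}. Writing $j\colon M^n \hookrightarrow M^{n+1}$ for the geodesic embedding produced by Theorem~\ref{thm:embed20}, the lemma will give
\begin{equation*}
    j^*(w_n(M^{n+1})) = w_n(M^n) + w_{n-1}(M^n)\, c,
\end{equation*}
so it suffices to choose $c$ making the right-hand side nonzero in $H^n(M^n; \ZZ_2)$; this will force $w_n(M^{n+1}) \ne 0$, since $j^*$ cannot send a zero class to a nonzero one.

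The remaining step is a standard Poincaré duality argument. Since $M^n$ is closed and connected, $\ZZ_2$-coefficient Poincaré duality (which holds regardless of orientability) guarantees that the cup product pairing $H^{n-1}(M^n; \ZZ_2) \times H^1(M^n; \ZZ_2) \to H^n(M^n; \ZZ_2) \cong \ZZ_2$ is nondegenerate. The hypothesis $w_{n-1}(M^n) \ne 0$ therefore supplies some $c_0 \in H^1(M^n; \ZZ_2)$ with $w_{n-1}(M^n)\, c_0 \ne 0$.

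To conclude, split into two cases depending on $w_n(M^n)$. If $w_n(M^n) \ne 0$ take $c = 0$, and the right-hand side reduces to $w_n(M^n) \ne 0$. If $w_n(M^n) = 0$ take $c = c_0$, and the right-hand side equals $w_{n-1}(M^n)\, c_0 \ne 0$. Either way, $j^*(w_n(M^{n+1})) \ne 0$, hence $w_n(M^{n+1}) \ne 0$. I do not anticipate any serious obstacle: the argument is essentially a one-line combination of the codimension-one Whitney sum formula with $\ZZ_2$-Poincaré duality, once the flexibility to prescribe $w_1$ of the normal bundle via Theorem~\ref{thm:embed20} is taken for granted.
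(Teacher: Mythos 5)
Your argument is correct and matches the paper's proof essentially verbatim: the paper also applies Theorem~\ref{thm:embed20} to a class $c$ chosen via $\ZZ_2$-Poincaré duality so that $w_n(M^n) + w_{n-1}(M^n)c \ne 0$, then concludes with Lemma~\ref{lemma:whitney-codim1} and injectivity of the zero class under $j^*$. Your explicit case split on whether $w_n(M^n)$ vanishes is just the unpacking of the paper's one-line duality claim.
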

\begin{proof}
    By Poincaré duality, we can find $c \in H^1(M^n; \ZZ_2)$ such that \begin{equation}
    w_n(M^n) + w_{n-1}(M^n)c \ne 0.    
    \end{equation}
    We then apply Theorem~\ref{thm:embed2} to this class, obtaining an embedding $j\colon M^n \hookrightarrow M^{n+1}$. By Lemma~\ref{lemma:whitney-codim1}, we have 
    $j^*(w_n(M^{n+1})) = w_n(M^n) + w_{n-1}(M^n)c \ne 0$,
    so $w_n(M^{n+1}) \ne 0$.
\end{proof}

Using this result, we can attack the problem of finding non-\spinc{} manifolds by starting from dimension $1$.

\begin{thm}\label{thm:main2}
    Let $M^1$ be a $k$-good hyperbolic $1$-manifold. Then, for all $n\ge 5$, $M^1$ geodesically embeds in a $k$-good orientable, non-\spinc{} hyperbolic $n$-manifold $M^n$.
\end{thm}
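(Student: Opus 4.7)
The plan is to construct an infinite tower $M^1 \hookrightarrow M^2 \hookrightarrow M^3 \hookrightarrow \cdots$ of totally geodesic codimension-$1$ embeddings of $k$-good hyperbolic manifolds, as outlined in Section~\ref{sec:intro}, maintaining at each step an appropriate Stiefel--Whitney condition, and finally verifying that $M^n$ for $n \ge 5$ is orientable with $w_3(M^n) \ne 0$, hence non-\spinc{} by Remark~\ref{rmk:w3}.

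First I would bootstrap from dimension~$1$ to dimension~$4$ by iterating Corollary~\ref{cor:nonzero-whitney}. Since $w_0(M^1) = 1 \ne 0$, three successive applications yield $k$-good embeddings $M^1 \hookrightarrow M^2 \hookrightarrow M^3 \hookrightarrow M^4$ with $w_1(M^2)$, $w_2(M^3)$, $w_3(M^4)$ all nonzero; note that $M^4$ is possibly non-orientable at this stage.

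The crucial step is the passage from $M^4$ to $M^5$, where I need to simultaneously attain orientability and preserve $w_3 \ne 0$. I would invoke Theorem~\ref{thm:embed2} with the specific class $c = w_1(M^4)$, producing an orientable $k$-good hyperbolic $5$-manifold $M^5 \supset M^4$ in which $w_1(\nu_{M^5}(M^4)) = w_1(M^4)$. Lemma~\ref{lemma:whitney-codim1} then gives
\begin{equation}
    j^*(w_3(M^5)) = w_3(M^4) + w_2(M^4)\, w_1(M^4),
\end{equation}
and here the Wu identity $w_1 w_2 = 0$ in dimension $4$ (the $k = 1$ case of Proposition~\ref{prop:wu}) kills the cross term, leaving $j^*(w_3(M^5)) = w_3(M^4) \ne 0$; in particular $w_3(M^5) \ne 0$.

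For $n \ge 5$ I would iterate by induction: given $M^n$ orientable with $w_3(M^n) \ne 0$, apply Theorem~\ref{thm:embed2} with $c = 0 = w_1(M^n)$, which preserves orientability and, by Lemma~\ref{lemma:whitney-codim1}, yields $j^*(w_3(M^{n+1})) = w_3(M^n) \ne 0$, so $w_3(M^{n+1}) \ne 0$. Finally, for each $n \ge 5$, orientability of $M^n$ together with $w_3(M^n) \ne 0$ allows Remark~\ref{rmk:w3} to identify $w_3(M^n) = \Sq^1(w_2(M^n))$ with the mod~$2$ reduction of $W_3(M^n)$, so $W_3(M^n) \ne 0$ and $M^n$ admits no \spinc{} structure. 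The hard part is clearly the dimension-$4$ transition: it is precisely the Wu relation $w_1 w_2 = 0$ that prevents the extra twisting needed to achieve orientability from cancelling the $w_3$ that we spent three steps building up.
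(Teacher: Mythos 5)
Your proposal is correct and follows essentially the same route as the paper's proof: three applications of Corollary~\ref{cor:nonzero-whitney} to reach $w_3(M^4)\ne 0$, then Theorem~\ref{thm:embed2} with $c=w_1(M^i)$ at each subsequent step, with the dimension-$4$ Wu relation $w_1w_2=0$ (and orientability thereafter) killing the cross term in Lemma~\ref{lemma:whitney-codim1}, and Remark~\ref{rmk:w3} giving the non-\spinc{} conclusion. No gaps; your isolation of the $M^4\to M^5$ transition as the crucial step matches the paper's case analysis exactly.
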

\begin{proof}
    %The proof is similar to that of Theorem~\ref{thm:main}.
    As anticipated, using Theorem~\ref{thm:embed2}, we recursively construct a sequence of $k$-good manifolds $(M^1, M^2, M^3, M^4, M^5, \dots)$, each one embedding geodesically in the next, and such that $M^n$ is orientable and non-\spinc{} for $n \ge 5$. By Remark~\ref{rmk:w3}, the latter condition holds when $w_3(M^n) \ne 0$.

    First, apply Corollary~\ref{cor:nonzero-whitney} three times starting from $M^1$, obtaining $k$-good manifolds $(M^1, M^2, M^3, M^4)$ such that $w_{i-1}(M^i) \ne 0$ (in particular $w_3(M^4) \ne 0$).

    Next, we repeatedly apply Theorem~\ref{thm:embed2} to obtain orientable manifolds $(M^i)_{i\ge 5}$, by choosing the class $c = w_1(M^i)$ at each step. By Lemma~\ref{lemma:whitney-codim1}, we have:
    \begin{align}
        j_i^*(w_3(M^{i+1})) &= w_3(M^i) + w_2(M^i)w_1(M^i)\\
        &= w_3(M^i) && \hspace{-5em}\text{$(*)$}\\
        &\ne 0. && \hspace{-5em}\text{(by induction)}
    \end{align}
    The equality labeled with $(*)$ can be shown to hold by cases: if $i = 4$, then $M^4$ satisfies $w_2(M^4) w_1(M^4) = 0$ by the Wu formulas; if $i \ge 5$, then $w_1(M^i) = 0$ by orientability of $M^i$. It then follows that $w_3(M^{i+1}) \ne 0$, completing the induction.
\end{proof}
We now show the existence of many such $1$-manifolds (i.e. circles).

\begin{lemma}
    For every totally real number field $k\ne \QQ$, there exists a $k$-good hyperbolic $1$-manifold $C_k$.
\end{lemma}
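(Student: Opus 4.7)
The plan is to build $C_k$ explicitly as a hyperbolic circle $\HH^1/\langle \eta\rangle$ for an infinite-order unit $\eta$ of a suitable quadratic extension of $k$. Everything reduces to producing an admissible binary quadratic form over $k$ together with an infinite-order element of its orthogonal group over $k$.

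First, since $k$ is totally real of degree $r = [k : \QQ] \ge 2$, weak approximation gives a non-square $\alpha \in k$ with $\sigma_1(\alpha) > 0$ at one chosen real embedding $\sigma_1$ and $\sigma_i(\alpha) < 0$ at the remaining ones. The form $f(x, y) = x^2 - \alpha y^2$ is then admissible over $k$: at $\sigma_1$ it has signature $(1, 1)$ and cuts out a copy of $\HH^1$, while at each $\sigma_i$ with $i \ge 2$ it is positive definite.

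Next, consider $K = k(\sqrt\alpha)$. The form $f$ is the norm form of the extension $K/k$, so every unit $\eta = a + b\sqrt\alpha \in \mathcal{O}_K^\times$ with $N(\eta) = a^2 - \alpha b^2 = 1$ acts on $k^2$ by the matrix $\begin{pmatrix} a & \alpha b \\ b & a \end{pmatrix} \in \mathrm{SO}(f, k)$. The archimedean places of $K$ consist of two real places above $\sigma_1$ and $r-1$ complex places above the remaining $\sigma_i$, so Dirichlet's unit theorem gives $\rk \mathcal{O}_K^\times = 2 + (r-1) - 1 = r$, one more than $\rk \mathcal{O}_k^\times = r - 1$. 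Hence the kernel of $N\colon \mathcal{O}_K^\times \to \mathcal{O}_k^\times$ has positive rank, yielding an $\eta$ of infinite order; at $\sigma_1$ this $\eta$ acts on the upper sheet of $\{f = -1\}$ (after replacing $\eta$ by $\eta^2$ if necessary to preserve the sheet) as a hyperbolic translation of positive length $|\log(a + b\sqrt{\sigma_1(\alpha)})|$.

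Setting $\Gamma = \langle \eta \rangle \cong \ZZ$ gives a torsion-free discrete subgroup of $\mathrm{SO}(f, k)$ acting cocompactly on $\HH^1$ by translations, so the quotient $C_k \coloneqq \HH^1/\Gamma$ is a closed oriented hyperbolic $1$-manifold (a circle) whose fundamental group $\Gamma$ lies in $\mathrm{O}(f, k)$, making $C_k$ a $k$-good hyperbolic $1$-manifold. The essential step is the Dirichlet rank computation, which guarantees the existence of an infinite-order norm-one unit; the rest is a routine dictionary between number theory and the hyperboloid model.
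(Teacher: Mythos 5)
Your proof is correct, but it takes a genuinely different route from the paper. The paper produces the same kind of admissible binary form over $k$ (a form of signature $(1,1)$ at one real place and definite at the others, obtained there as $\diag(-\delta,1)$ with $\delta = \alpha - q$), but then simply quotes general machinery: the Borel--Harish-Chandra theorem gives that $\mathrm{O}^+(f,\mathcal{O}_k)$ is a lattice, and a torsion-free finite-index subgroup (Selberg) gives the circle $C_k$. You instead unwind that machinery in this rank-one case: identifying $\mathrm{SO}(f)$ with the norm-one torus of $K = k(\sqrt{\alpha})$ and computing ranks via Dirichlet's unit theorem, you exhibit an explicit infinite-order norm-one unit $\eta$ and take $\pi_1(C_k) = \langle\eta\rangle$ directly. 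This is more elementary and self-contained (in dimension $1$, Borel--Harish-Chandra for this torus is essentially your Dirichlet computation), and it makes the length of $C_k$ explicit; the paper's argument is shorter and generalizes without change to the higher-dimensional groups used elsewhere. Two small points you leave implicit but which follow easily from what you wrote: the translation length is indeed positive, since a norm-one unit with $\lvert\tau(\eta)\rvert = 1$ at the real places over $\sigma_1$ would have absolute value $1$ at every archimedean place (the complex places over $\sigma_i$, $i\ge 2$, give $\lvert\tau(\eta)\rvert^2 = \lvert\sigma_i(N(\eta))\rvert = 1$ automatically) and hence be a root of unity, contradicting infinite order; and for the \emph{arithmetic of simplest type} clause in the definition of $k$-good you should note that $\langle\eta\rangle$ has finite index in the full norm-one unit group, which is commensurable with $\mathrm{O}(f,\mathcal{O}_k)$ --- both facts are immediate from your computation that the norm-one units have rank exactly $1$ (after scaling $\alpha$ to lie in $\mathcal{O}_k$, which is harmless).
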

\begin{proof}
    Write $k = \QQ(\alpha)$, with $\alpha$ the largest root of its minimal polynomial, and let $R$ be the ring of integers of $k$. We can find $q \in \QQ$ such that $\delta \coloneqq \alpha - q$ is positive, and all Galois conjugates of $\delta$ are negative. Then, the quadratic form $f\coloneqq \diag(-\delta, 1)$ is admissible.

    Let $\Gamma$ be the arithmetic group of simplest type $\mathrm{O}^+(f, R)$, up to the appropriate conjugation into $\mathrm{O}^+(1,1)$. Then, $\Gamma$ has finite covolume by the Borel--Harish-Chandra theorem, and there exists a torsion-free subgroup $\Gamma_1 < \Gamma$ of finite index, corresponding to a $k$-good $1$-manifold $C_k$.
\end{proof}
\begin{remark}
    The $k$-goodness condition on the circle $C_k$, as an arithmetic $1$-manifold, is but a condition on its length $\operatorname{len}(C_k)$; among other things, it implies $\cosh(\operatorname{len}(C_k))\in k$.
\end{remark}
Since the field of definition $k$ is a commensurability invariant among arithmetic manifolds of simplest type, %TODO cite? Vinberg Chapter 6 §1.1
we have, after applying Theorem~\ref{thm:main2} to $C_k$:
\begin{cor*}[{Theorem~\ref{thm:main20}}]
    For every $n \ge 5$, there exist infinitely many commensurability classes of closed orientable hyperbolic $n$-manifolds that have no \spinc{} structure.
\end{cor*}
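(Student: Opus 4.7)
The plan is to assemble the pieces already proved: Theorem~\ref{thm:main2} produces, from any $k$-good circle $C_k$, a closed orientable non-\spinc{} hyperbolic $n$-manifold of dimension $n$ for each $n \ge 5$, and the preceding lemma guarantees that a $k$-good circle exists for every totally real field $k \neq \QQ$. So the construction of a single manifold in each dimension is already in hand; the only remaining issue is to produce infinitely many commensurability classes.

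First I would fix an infinite family of pairwise distinct totally real number fields different from $\QQ$: the simplest choice is $k_p \coloneqq \QQ(\sqrt{p})$ as $p$ ranges over the primes, since these are all real quadratic and mutually distinct. For each such $k_p$, apply the previous lemma to obtain a $k_p$-good circle $C_{k_p}$, and then apply Theorem~\ref{thm:main2} to obtain, for every $n \ge 5$, a closed orientable hyperbolic $n$-manifold $M^n_{k_p}$ which is $k_p$-good and admits no \spinc{} structure.

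To conclude, I would invoke the fact, standard in the theory of arithmetic hyperbolic manifolds of simplest type, that the field of definition $k$ is an invariant of the commensurability class: two arithmetic hyperbolic manifolds of simplest type which are commensurable must have the same associated totally real field $k$ (and, up to similarity, the same admissible quadratic form). Since the fields $k_p$ are pairwise distinct, the manifolds $\{M^n_{k_p}\}_p$ lie in pairwise distinct commensurability classes, yielding infinitely many such classes in each dimension $n \ge 5$.

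There is really no hard step here; the entire argument is an assembly of already-proved results. If anything, the only point requiring care is the commensurability invariant, which is cited rather than re-proved in the paper, and the trivial verification that infinitely many totally real fields $k \ne \QQ$ are available—both of which are immediate.
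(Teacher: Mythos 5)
Your proposal matches the paper's argument: the paper likewise combines the existence of a $k$-good circle $C_k$ for each totally real $k \ne \QQ$ with Theorem~\ref{thm:main2} and the commensurability invariance of the field of definition for arithmetic manifolds of simplest type. The only cosmetic difference is that you exhibit an explicit infinite family of fields $\QQ(\sqrt{p})$, which the paper leaves implicit; the argument is correct.
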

Incidentally, from Remark~\ref{rmk:w3} and the proof of Theorem~\ref{thm:main2} we also obtain:
\begin{cor}
    There exist infinitely many commensurability classes of closed hyperbolic $4$-manifolds that have no \pinc{} structure.
\end{cor}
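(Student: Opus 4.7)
The proposal is to harvest the intermediate output of the proof of Theorem~\ref{thm:main2}: the construction already produces, for each totally real number field $k\ne\QQ$, a $k$-good hyperbolic $4$-manifold $M^4$ with $w_3(M^4)\ne 0$. All that remains is to convert non-vanishing of $w_3$ into non-vanishing of $W_3$, which is the obstruction to a \pinc{} structure.

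Concretely, starting from the $k$-good circle $C_k$ from the previous lemma, I would apply Corollary~\ref{cor:nonzero-whitney} three times to obtain a $k$-good hyperbolic chain $(C_k, M^2, M^3, M^4)$ with $w_{i-1}(M^i)\ne 0$ for $i=2,3,4$; in particular $w_3(M^4)\ne 0$. By Remark~\ref{rmk:w3}, the mod-$2$ reduction of $W_3(M^4)$ equals
\begin{equation}
    \Sq^1(w_2(M^4)) = w_1(M^4)\,w_2(M^4) + w_3(M^4).
\end{equation}
Since $M^4$ is a closed $4$-manifold, the Wu relation gives $w_1(M^4)\,w_2(M^4)=0$ (this is the case $k=1$ of Proposition~\ref{prop:wu}, or equivalently the classical Wu formula in dimension $4$). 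Thus the mod-$2$ reduction of $W_3(M^4)$ is $w_3(M^4)\ne 0$, so $W_3(M^4)\ne 0$ and $M^4$ admits no \pinc{} structure.

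Finally, for the commensurability statement I would invoke the same principle used for Theorem~\ref{thm:main20}: the field of definition $k$ is a commensurability invariant among arithmetic hyperbolic manifolds of simplest type, so letting $k$ range over infinitely many distinct totally real fields yields infinitely many commensurability classes of such $M^4$.

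I do not anticipate any real obstacle here: the construction is already in place in Theorem~\ref{thm:main2}, and the only new ingredient is the observation that the $4$-dimensional Wu relation kills the cross term $w_1 w_2$, which is precisely why $w_3\ne 0$ suffices to obstruct \pinc{} (rather than only \spinc{}) in this dimension. Note that we do not need $M^4$ to be orientable, which is fortunate since the inductive construction only guarantees orientability from dimension $5$ onwards.
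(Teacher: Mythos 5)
Your proposal is correct and matches the paper's argument: the corollary is obtained exactly by harvesting the $4$-dimensional stage $M^4$ of the construction in Theorem~\ref{thm:main2} (where $w_3(M^4)\ne 0$), using Remark~\ref{rmk:w3} together with the dimension-$4$ Wu relation $w_1w_2=0$ to conclude $W_3(M^4)\ne 0$, and varying the field $k$ to get infinitely many commensurability classes. No gaps.
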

Such manifolds must be non-orientable by the Hirzebruch--Hopf theorem~\cite{hirzebruch-hopf}.

Finally, using Proposition~\ref{prop:wu}, we can easily prove Theorem~\ref{thm:main30}, as a generalization of Theorem~\ref{thm:main2}:
\begin{thm:main30}
    For every $m \ge 1$ and $n \ge 4m+1$, there exist infinitely many commensurability classes of closed orientable hyperbolic $n$-manifolds $M$ such that $w_{4m-1}(M) \ne 0$.
\end{thm:main30}
\begin{proof}
    The proof is completely analogous to that of Theorem~\ref{thm:main2}: first, starting from a $k$-good circle $M^1$, we apply Corollary~\ref{cor:nonzero-whitney} to construct a sequence of $k$-good manifolds $(M^1, M^2, \dots, M^{4m})$ such that $w_{i-1}(M^i) \ne 0$. Then, we repeatedly apply Theorem~\ref{thm:embed2} with $c = w_1(M^i)$, obtaining orientable manifolds $(M^i)_{i \ge 4m+1}$. Crucially, we need the Wu formula $w_{4m-2}(M^{4m})w_1(M^{4m}) = 0$ from Proposition~\ref{prop:wu} in order to have
    \begin{align}
        j_{4m}^*(w_{4m-1}(M^{4m+1})) &= w_{4m-1}(M^{4m+1}) + w_{4m-2}(M^{4m+1})w_1(M^{4m+1}) \\ &= w_{4m-1}(M^{4m+1}) \ne 0.
    \end{align}
    The induction proceeds as in Theorem~\ref{thm:main2}, showing that all subsequent manifolds also have $w_{4m-1}(M^{i}) \ne 0$.
\end{proof}
\begin{cor}
    There exist closed orientable hyperbolic manifolds with nonzero Stiefel--Whitney classes in arbitrarily high degree.
\end{cor}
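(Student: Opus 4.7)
The plan is to mimic the proof of Theorem~\ref{thm:main2} almost verbatim, replacing the $4$-dimensional Wu identity $w_1 w_2 = 0$ (which made the dimension-$5$ step work there) with the more general identity from Proposition~\ref{prop:wu}, namely $w_1 w_{4m-2} = 0$ on closed $4m$-manifolds. The appearance of dimension $4m$ as the cutoff in the statement is already a hint: $4m$ is precisely the last ``non-orientable'' step we need to traverse before we can freely use orientability.

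Concretely, for each totally real number field $k \ne \QQ$, I would start with a $k$-good circle $C_k$, whose existence was established just before the statement. Applying Corollary~\ref{cor:nonzero-whitney} iteratively $4m-1$ times produces a chain of $k$-good hyperbolic manifolds $M^1 = C_k, M^2, \dots, M^{4m}$ with $w_{i-1}(M^i) \ne 0$ for every $i$; in particular $w_{4m-1}(M^{4m}) \ne 0$. From this point on, I would apply Theorem~\ref{thm:embed2} with the choice $c = w_1(M^i)$, obtaining an orientable $k$-good manifold $M^{i+1}$ in which $M^i$ embeds geodesically along some map $j_i$. By Lemma~\ref{lemma:whitney-codim1},
\begin{equation}
    j_i^*(w_{4m-1}(M^{i+1})) = w_{4m-1}(M^i) + w_{4m-2}(M^i)\, w_1(M^i),
\end{equation}
so the induction will go through as long as the mixed term vanishes. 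For $i = 4m$, this is exactly the content of Proposition~\ref{prop:wu}; for $i \ge 4m+1$, the manifold $M^i$ is already orientable, so $w_1(M^i) = 0$ and the product is trivially zero. Either way, $w_{4m-1}(M^{i+1}) \ne 0$, and the recursive construction continues indefinitely, producing $k$-good orientable manifolds $M^n$ with $w_{4m-1}(M^n) \ne 0$ in every dimension $n \ge 4m+1$.

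To upgrade this to infinitely many commensurability classes, I would simply vary $k$ over the (infinitely many) totally real number fields $\ne \QQ$. As noted in the paragraph preceding Theorem~\ref{thm:main20}, the field of definition is a commensurability invariant among arithmetic hyperbolic manifolds of simplest type, so distinct $k$ yield non-commensurable examples in each dimension.

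The only delicate step is the transition at $i = 4m$: it is the first application of Theorem~\ref{thm:embed2} in the orientability-producing phase, but it must be performed while $M^{4m}$ is not yet orientable, so the term $w_{4m-2}(M^{4m}) w_1(M^{4m})$ cannot be killed for free. Everything then hinges on Proposition~\ref{prop:wu}; without that Wu identity the induction would not start, and the construction would be stuck at dimension $4m$. Once that hurdle is crossed, all higher-dimensional steps are straightforward because orientability trivializes the mixed term.
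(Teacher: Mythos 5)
Your proposal is correct and follows essentially the same route as the paper: it reproduces the proof of Theorem~\ref{thm:main30} (iterating Corollary~\ref{cor:nonzero-whitney} up to dimension $4m$, then Theorem~\ref{thm:embed2} with $c = w_1(M^i)$, using Proposition~\ref{prop:wu} exactly at the step $i = 4m$ and orientability afterwards), from which the corollary follows at once by letting $m$ grow, since $w_{4m-1}(M) \ne 0$ gives nonzero classes in arbitrarily high degree.
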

%\begin{comment}
%\clearpage
\section{Explicit examples}\label{sec:explicit}
In this section, we outline a semi-explicit procedure to construct closed orientable hyperbolic $5$-manifolds with no \spinc{} structure. This construction is compatible with that of Theorem~\ref{thm:main2} and yields $\QQ(\sqrt{5})$-good manifolds.

\subsection{Right-angled polytopes}
Let us recall some general definitions about polytopes in hyperbolic space.

A \emph{hyperbolic polytope} $P$ is the intersection of finitely many half-spaces in $\HH^n$. The \emph{dimension} of $P$, $\dim P$, is the least dimension of a hyperbolic subspace containing $P$, or $-1$ if $P$ is empty. Such a subspace is called a \emph{supporting subspace} for $P$.

The boundary of $P$ (relative to its supporting subspace) is naturally stratified by dimension and consists of polytopes of dimension $k \in \{0, 1, \dots, \dim P - 1\}$, called the \emph{$k$-faces} of $P$. In particular, faces of dimension $0$ and codimension $2$ and $1$ are respectively called \emph{vertices}, \emph{ridges} and \emph{facets} of $P$.
When two facets meet at a ridge, their \emph{dihedral angle} is well defined, and it is less than $\pi$ by convexity of $P$.

A polytope is said to be \emph{right-angled} if all its dihedral angles are $\pi/2$. Right-angled polytopes are a special case of \emph{Coxeter polytopes}, which have dihedral angles of the form $\pi/k$ for $k \ge 2$. Any face of a right-angled polytope is itself a right-angled polytope.

In what follows, we implicitly assume that every right-angled polytope is \emph{compact}, a condition equivalent to having finite volume and no points at infinity.

\subsection{The coloring method}
The importance of right-angled polytopes stems from the fact that they can be used to construct hyperbolic manifolds with the \emph{coloring method}.

Let $P$ be a right-angled polytope with facets $\Fac(P) = \{F_1, \dots, F_k\}$. By abuse of notation, we may refer to the facet $F_i$ using just the number $i$, and consequently write $\Fac(P) = \{1, \dots, k\}$.

\begin{defin}
    A \emph{(proper) coloring} of $P$ is a map $\lambda\colon \Fac(P) \to V$ for some finite-dimensional vector space $V$ over $\ZZ_2$, such that, when facets $i_1, \dots, i_s$ meet at a vertex, the vectors $\{\lambda(i_1), \dots, \lambda(i_s)\}$ form a linearly independent set. The vector $\lambda(i)$ is called the \emph{color} of facet $i$.
\end{defin}

Given a coloring $\lambda$, we can construct a closed hyperbolic manifold $M(P, \lambda)$. Take $|V|$ copies $\{P_v\}$ of $P$, indexed by vectors $v \in V$. Then, for each facet $F \in \Fac(P)$ and each vector $v \in V$, glue $P_v$ and $P_{v + \lambda(F)}$ along $F$, using the identity map of $F$. 

The resulting space is indeed a closed manifold, sometimes called a \emph{real toric manifold} over $P$. Moreover, the construction is clearly invariant under vector space isomorphisms, so we may assume $V = \ZZ_2^m$ for some $m$, if needed. 
We note that $M(P, \lambda)$ is connected if and only if $\image(\lambda)$ spans $V$; otherwise each connected component arises from gluing some set $\{P_v : v \in C\}$, where $C$ is a coset of $\vspan \image(\lambda)$ in $V$. 

\begin{remark}\label{rmk:hyp-mfd-right}
    This construction can also be carried out when $P$ has nontrivial topology, that is, when it is a manifold with right-angled corners, provided that every facet is embedded, or equivalently, not adjacent to itself along a ridge. This is essentially because such objects are locally modeled on right-angled polytopes and also have a natural stratification.
\end{remark}

%It is also possible to determine orientability of $M(P,\lambda)$ from the coloring $\lambda$:
%\begin{prop}
%    Let $P$ be a right-angled hyperbolic polytope (or manifold with embedded facets) and let $\lambda\colon \Fac(P) \to V$ be a coloring of $P$. Then $M(P, \lambda)$ is orientable if and only if there exists a basis of $V$ where, for each $i$, the vector $\lambda(i)$ has an odd number of ones.
%\end{prop}

\begin{remark}\label{rmk:coloring-extend}
    A \emph{partial} coloring on a proper subset $A \subset \operatorname{Fac}(P)$, with values in a $\ZZ_2$-vector space $V$, can always be extended to a genuine coloring $\operatorname{Fac}(P) \to V \oplus V'$ by coloring the remaining facets with different basis vectors of $V'$.
\end{remark}

%The dependence of $M(P, \lambda)$ on the coloring itself has a few good properties. A surjective linear map $\phi\colon \ZZ_2^m \twoheadrightarrow \ZZ_2^p$ induces a covering map $M(P, \lambda) \twoheadrightarrow M(P, \phi \circ \lambda)$ of degree $2^{m-p}$, which becomes an isometry if $\phi$ is an isomorphism. This implies that the real toric manifold for a coloring in an arbitrary finite-dimensional $\ZZ_2$-vector space is well defined.

%The manifold $M(P, \lambda)$ has a natural stratification induced by that of $P$. Accordingly, we shall speak of the \emph{faces} (\emph{facets, ridges, vertices}) of $M(P, \lambda)$. The copies of $P$ will be called \emph{cells}. Since all cells coincide under the covering map, each face of $M(P, \lambda)$ can be naturally identified with the corresponding face of $P$.

It is convenient to identify $P$ with one of its copies inside $M$, say $P_0$. Now, any face $F$ of $P$ is contained in a totally geodesic, possibly disconnected submanifold $M_F$ of the same dimension, obtained as the preimage of $F$ under the covering map $M\twoheadrightarrow P$. The following result provides a description of $M_F$ as a real toric manifold, generalizing~\cite[Lemma~2.8]{induced-coloring}.
\begin{prop}
\label{prop:coloring-sub}
    Let $P$ be a right-angled hyperbolic manifold with embedded facets, and let $F$ be a face of $P$. Let $\operatorname{adj}(F)$ be the set of facets of $P$ that intersect the boundary of $F$ but not its interior, and let $\operatorname{supp}(F)$ be the set of facets of $P$ containing $F$ in their boundary. Then $\Fac(F)$ maps naturally onto $\operatorname{adj}(F)$, and $M_F = M(F, \lambda_F)$, where $\lambda_F$ is the composition
    \begin{equation}
        \Fac(F) \twoheadrightarrow \operatorname{adj}(F) \hookrightarrow \Fac(P) \xrightarrow{\,\,\,\lambda\,\,\,} V \twoheadrightarrow V / \langle \lambda(i) \mid i \in \operatorname{supp}(F)\rangle.
    \end{equation}
\end{prop}
\begin{proof}
    The space $V$ acts by isometries on the $P$-tessellation of $M(P, \lambda)$. This action restricts to the tessellation of $M_F$ by copies of $F$. Let us fix a copy of $F$, say $F_0$; then its stabilizer is precisely $\langle \lambda(i) \mid i \in \operatorname{supp}(F)\rangle$, and the manifold $M_F$ is obtained by gluing copies of $F$, canonically indexed by the quotient $V / \langle \lambda(i) \mid i \in \operatorname{supp}(F)\rangle$. It is not hard to check that the gluing is induced by the coloring $\lambda_F$.
\end{proof}
\begin{defin}
    We say that a cohomology class in $H^1(M(P, \lambda); \ZZ_2)$ is a \emph{sum of hypersurfaces} if it is dual to a sum of codimension-$1$ cycles of the form $M_F$, where $F\in \Fac(P)$. 
\end{defin}

\subsection{Examples of right-angled polytopes}

It can be shown that compact right-angled polytopes do not exist in dimension $n\ge 5$~\cite{right-dim5}, but examples are abundant in lower dimensions. Among these, two are of particular interest to us: the right-angled dodecahedron in dimension $3$, and the right-angled $120$-cell in dimension $4$. These highly symmetric polytopes are hyperbolic analogs of their Euclidean regular counterparts and, as hyperbolic orbifolds, they are both $\QQ(\sqrt{5})$-good.

A simple way to construct either polytope starts by placing its Euclidean counterpart inside the Beltrami-Klein model of $\HH^3$ or $\HH^4$, centered at the origin. This gives a (regular) hyperbolic polytope, since the Beltrami-Klein model preserves collinearity. If we apply a Euclidean scaling to the polytope, taking care that it remains contained within the interior of the disk, we note that the dihedral angle is a continuous function of the scaling factor, and can take on an interval of values. Calculations show that this interval contains $\pi/2$ for both the dodecahedron and the $120$-cell, resulting in the corresponding right-angled polytopes.

\subsection{Small covers}

It is not hard to see that, in a compact right-angled polytope $P$, exactly $\dim P$ facets meet at a vertex. This fact, combined with the linear independence condition on a coloring $\lambda$, implies that the colors belong to a vector space of rank $m \ge \dim P$. The equality case corresponds to \emph{small covers}:

\begin{defin}
    A \emph{small cover} of a compact right-angled $n$-polytope $P$ is a connected real toric manifold $M(P, \lambda)$ with $\lambda \colon \{1,\dots,k\} \to \ZZ_2^n$. 
\end{defin}

These manifolds are especially well studied because they are easier to classify by computer, due to their simplicity, and they have good homological behavior.

Indeed, the small covers of the dodecahedron have been classified by Garrison and Scott~\cite{garrison-scott}, and they form 25 isomorphism classes. More recently, Ma and Zheng~\cite{ma-zheng} performed a partial classification of small covers of the $120$-cell, by considering the case $\lvert{\image(\lambda)}\rvert \le 8$.

As for their algebraic properties, there is an explicit formula for the $\ZZ_2$-cohomology ring of a small cover~\cite[Theorem~4.14]{dj-small-covers}:
\begin{equation}
    H^*(M(P, \lambda); \ZZ_2) = \ZZ_2[a_1, \dots, a_k] / (I + J),
\end{equation}
where the ideal $I$ captures the combinatorics of $P$:
\begin{equation}
    I = \left(a_{i_1}\dots a_{i_s} \mid F_{i_1} \cap \dots \cap F_{i_s} = \varnothing \right),
\end{equation}
and the ideal $J$ captures information about the coloring:
\begin{equation}
    J = \bigg(\sum_{i=1}^k \lambda(i)_j\cdot a_i \bigm| j = 1,\dots,n \bigg).
\end{equation}
The ring is naturally graded by degree, and the $a_i$ are generators for the first cohomology group. Each $a_i$ can be interpreted geometrically by taking the Poincaré dual; the resulting codimension-$1$ homology class is represented by the hypersurface $M_i$ containing facet $i$. As a consequence:
\begin{cor}
    If $M\coloneqq M(P, \lambda)$ is a small cover, then every class $c \in H^1(M, \ZZ_2)$ is a sum of hypersurfaces.
\end{cor}
Higher-degree classes involving products of distinct generators can also be dually interpreted, using the fact that cup products are dual to intersections.
In this view, the ideal $I$ shows that non-intersecting sets of facets give vanishing cup products; as for $J$, its $j$-th generator is dual to the boundary separating the copies $P_v$ with $v_j = 0$ from those with $v_j = 1$.

Finally, the total Stiefel--Whitney class of a small cover has a simple expression in terms of the ring generators~\cite[Corollary~6.8]{dj-small-covers}:
\begin{equation}\label{eq:sw-total}
    w(M(P,\lambda)) = \prod_{i = 1}^k (1 + a_i),
\end{equation}
which can be expanded, in each degree, to obtain
\begin{equation}\label{eq:sw-degree}
    w_d(M(P, \lambda)) = \sum_{\substack{S \subseteq \{1, \dots,k\} \\ |S| = d}} \prod_{i \in S} a_i.
\end{equation}

\subsection{The construction}
Recall the notion of right-angled hyperbolic manifold from Remark~\ref{rmk:hyp-mfd-right}. The following result serves as a constructive substitute of Theorem~\ref{thm:embed2}:

\begin{prop}\label{prop:3in4}
    Let $X$ be a real toric manifold over a right-angled hyperbolic $n$-polytope $P$, with characteristic function $\lambda\colon \{1,\dots,k\}\to \ZZ_2^m$, and let $c \in H^1(X; \ZZ_2)$ be a sum of hypersurfaces. Let $Q$ be a right-angled hyperbolic $(n+1)$-manifold with embedded facets. Assume that $Q$ has a facet isometric to $P$, and that the natural map $\Fac(P) \twoheadrightarrow \operatorname{adj}(P)$ is a bijection (these conditions hold if $Q$ is a polytope). Then $X$ embeds as a totally geodesic submanifold of a real toric manifold $Y$ over $Q$, such that $w_1(\nu_Y(X)) = c$. Moreover, if $c = w_1(X)$, then we can take $Y$ to be orientable.
\end{prop}
\begin{proof}
    For the case $c = w_1(X)$, we can prove that $X$ embeds geodesically in an orientable real toric manifold $Y$ over $Q$, as in~\cite[Proposition~2.9]{induced-coloring}. Hence, $w_1(\nu_Y(X)) = w_1(X) = c$.

    In the general case, as in the previous section, let $M_i$ be the geodesic surface of $X$ consisting of all copies of the facet $i$ of $P$; then we can write $c = c_1[M_1] + \dots + c_{k}[M_{k}]$, for some possibly non-unique coefficients $c_i \in \ZZ_2$.

    We will construct $Y$ by coloring $Q$ as follows. Label its facet isometric to $P$ with $k+1$, and label its adjacent facets $1,\dots,k$ as in the labeling of $P$ (which can be done since $\Fac(P) \simeq \operatorname{adj}(P)$). Then, define a partial coloring $\lambda^* \colon \{1,\dots,k+1\} \to \ZZ_2^m \times \ZZ_2$:
    \begin{equation}
        \lambda^*(i) \coloneqq
        \left\lbrace\,
        \begin{array}{@{}l@{}r@{\,}r@{}r@{\quad}l@{}}
            (&\lambda(i), &c_i&) & \text{if }i\le k, \\
            (&\mathbf{0}, &1&) & \text{if }i = k+1.
        \end{array}
        \right.
    \end{equation}
    As in Remark~\ref{rmk:coloring-extend}, we can extend $\lambda^*$ to a coloring $\til{\lambda}$ of $Q$. We claim that the resulting $4$-manifold $Y \coloneqq M(Q, \til\lambda)$ satisfies the condition.

    First, note that $X$ is embedded in $Y$ as a connected component of the preimage of facet $k+1$. Indeed, such a component is a real toric manifold over $P$; by Proposition~\ref{prop:coloring-sub}, the coloring is given by the images of $\til{\lambda}(i), 1\le i \le k$ in the quotient by $\operatorname{span}(\til{\lambda}(k+1))$, and clearly such a coloring is isomorphic to the original $\lambda$.

    The class $w_1(\nu_Y(X)) \in H^1(X; \ZZ_2) \simeq \Hom(\pi_1(X); \ZZ_2)$ is the monodromy of the double cover of $X$ ($S^0$-bundle) contained in $\nu_Y(X) \setminus X$, and can be computed as follows: given a closed loop $\gamma \in \pi_1(X)$ in general position, intersecting facets $i_1, \dots, i_s$ in order, we have \begin{equation}
        w_1(\nu_Y(X))(\gamma) = [\til{\lambda}(i_1) + \dots + \til{\lambda}(i_s)] \cdot \til{\lambda}(k+1) = c_{i_1} + \dots + c_{i_s}.
    \end{equation}
    On the other hand, we have
    $
        c(\gamma) = \sum_{i=1}^{k} c_i(\gamma \cdot \Sigma_i)
    $. Each intersection of $\gamma$ with facet $i_s$ contributes $c_s$ to the sum, which then equals $w_1(\nu_Y(X))(\gamma)$. Since $\gamma$ was arbitrary, the proof is complete.
\end{proof}

We will now use this proposition to reproduce the proof of Theorem~\ref{thm:main2}.
\begin{comment}
    A surface that could be used as a starting point is the unique small cover over the regular right-angled pentagon, obtained as follows: if the edges are labeled $1,2,3,4,5$ sequentially, then the coloring is
\begin{equation}
        [\lambda(i)]_{i=1}^5 \coloneqq [(1,0),(0,1),(1,0),(0,1),(1,1)].
\end{equation}
The resulting non-orientable surface $S$ is $\QQ(\sqrt{5})$-good, has Euler characteristic $-1$ and is homeomorphic to $\mathbb{RP}^2 \# \mathbb T^2$. However, if we apply Proposition~\ref{prop:3in4} directly by choosing $c_{(2)}$ as in Theorem~\ref{thm:main2}, with the right-angled dodecahedron as $Q$, we do not necessarily obtain a small cover. This becomes a problem, because our choice of $c_{(3)}$ is not guaranteed to be a sum of hypersurfaces.
\end{comment}

In the steps up to dimension $4$, the class $c$ of Corollary~\ref{cor:nonzero-whitney} is not guaranteed to be a sum of hypersurfaces, since Proposition~\ref{prop:3in4} does not necessarily give a small cover. Hence, we will skip to dimension $3$ and start from a small cover of the right-angled dodecahedron. Using the computer algebra system SageMath~\cite{sagemath}, we apply formula~(\ref{eq:sw-degree}) to the $25$ small covers in question, discovering that $22$ of these have non-vanishing second Stiefel--Whitney class.\footnote{Referring to~\cite[Table~1]{garrison-scott}, the other three are at rows $10$ and $13$ in the left column, and row $3$ in the right column. They are exactly the small covers of the dodecahedron having an isometry of order $3$.}

Let $X$ be one of these $22$ small covers and let $c_{(3)} \in H^1(X; \ZZ_2)$ be any class such that $w_2(X) \cdot c_{(3)} \ne 0$. Using Proposition~\ref{prop:3in4} with the $120$-cell as $Q$ and $c = c_{(3)}$, we obtain a $4$-manifold $Y$ with $w_3(Y) \ne 0$.

Even if $Y$ is not a small cover, at this point we choose $c = c_{(4)} = w_1(Y)$, which is always a sum of hypersurfaces: for any real toric manifold over a polytope, we have
\begin{equation}
    w_1(M(P, \lambda)) = \sum_{i \in E(P, \lambda)} [M_i]
\end{equation}
where
\begin{equation}
    E(P, \lambda) \coloneqq \{i\in \operatorname{Fac}(P)\mid \text{$\lambda(i)$ has an even number of ones}\}.
\end{equation}
Hence, to finally obtain an orientable, non-\spinc{} $5$-manifold, we just need to choose a suitable $Q$ having the $120$-cell as a facet. However, since compact right-angled hyperbolic $5$-polytopes do not exist, we will construct a right-angled $5$-manifold based on a $5$-polytope $Q_0$ discovered by Bugaenko~\cite{bugaenko}, with Coxeter diagram
\begin{equation}
      \begin{tikzpicture}
            \node[dot, fill=black] (a) at (0,0) {};
            \node[dot, fill=black] (b) at (1,0) {};
            \node[dot, fill=black] (c) at (2,0) {};
            \node[dot, fill=black] (d) at (3,0) {};
            \node[dot, fill=black] (e) at (4,0) {};
            \node[dot, fill=black] (f) at (5,0) {};
            \node[dot, fill=black] (g) at (6,0) {};
        	\draw[draw=black, double distance=3pt, thin, solid] (a) -- (b);
        	\draw[draw=black, thin, solid] (a) -- (b);
        	\draw[draw=black, thin, solid] (b) -- (c);
        	\draw[draw=black, thin, solid] (c) -- (d);
        	\draw[draw=black, thin, solid] (d) -- (e);
        	\draw[draw=black, double distance=2.5pt, thin, solid] (e) -- (f);
        	\draw[draw=black, thin, dashed] (f) -- (g);
        \end{tikzpicture}
\end{equation}
This polytope is, perhaps unsurprisingly, also arithmetic of simplest type and defined over $\QQ(\sqrt{5})$. It is a compact hyperbolic prism over a $4$-simplex, with the two bases isometric to the fundamental simplices $\Delta_3, \Delta_4$ of the order-$3$ $120$-cell and right-angled $120$-cell respectively.

Let $Z$ be a closed hyperbolic $4$-manifold tessellated by the order-$3$ hyperbolic $120$-cell; examples include~\cite{conder-maclachlan, c-long}. Up to passing to a finite-index cover, by an injectivity radius argument, we may assume that each $120$-cell is embedded in $Z$ and has pairwise distinct neighboring cells. We can then use the tessellation of $Z$ into copies of $\Delta_3$ to glue copies of $Q_0$ along both sides of $Z$ (in a local sense). This can be done in such a way as to obtain a $5$-manifold with right-angled corners $W$, homeomorphic to the determinant line bundle on $Z$ and hence orientable. The boundary of the manifold $W$ consists of many embedded right-angled $120$-cells, each having pairwise distinct adjacent facets. Hence, it can be used for our construction, ultimately yielding an orientable, non-\spinc{} hyperbolic $5$-manifold, tessellated by the polytope $Q_0$.

%\begin{thm}\label{thm:non-spinc-explicit}
%    There exists a closed hyperbolic $5$-manifold without \spinc{} structures, tessellated by the Coxeter polytope $Q_0$.
%\end{thm}

We note that $W$ is likely to have high combinatorial complexity, and so is \emph{a fortiori} the final non-\spinc{} $5$-manifold.
%\end{comment}
%\clearpage
\setlength\bibitemsep{1ex}
\printbibliography[heading=bibintoc, title={References}]
\end{document}